\documentclass[a4paper,11pt]{article}

\usepackage[T1]{fontenc}
\usepackage[utf8]{inputenc}
\usepackage{amsmath,amssymb,amsthm,mathtools}
\usepackage{geometry}
\usepackage{hyperref}
\usepackage{authblk}
\newcommand{\C}{\mathbb{C}}
\newcommand{\ket}[1]{\lvert #1\rangle}
\newcommand{\QLS}{\mathrm{QLS}}
\newcommand{\card}{\operatorname{card}}
\newcommand{\vectset}{\mathcal{V}}

\newtheorem{definition}{Definition}
\newtheorem{proposition}{Proposition}
\newtheorem{remark}{Remark}

\title{Three Quantum Latin Squares of Order 6 with Cardinalities 13, 15, and 17}
\author[1]{Zhipeng Xu\thanks{Corresponding author: \texttt{xuzhp@ntu.edu.cn}}}
\affil[1]{School of Mathematics and Statistics, Nantong University, Nantong, China}

\begin{document}
\maketitle

\begin{abstract}
We give three explicit quantum Latin squares of order $6$, with cardinalities
$13$, $15$, and $17$. Throughout, vectors differing only by a global phase are
counted as identical. The cardinality-$13$ construction is based on an
orthogonal direct-sum decomposition $\C^6=\C^4\oplus\C^2$. The cardinality-$15$
and cardinality-$17$ constructions are based on two-dimensional Hadamard pairs
supported on coordinate planes.
\end{abstract}
\noindent\textbf{Keywords:}
Quantum Latin square; cardinality; order six; orthogonal basis; quantum combinatorics
\section{Definitions and background}

Classical Latin squares go back at least to Euler's work on Graeco-Latin
squares \cite{Euler1782}; standard background can be found in the monographs and
surveys on Latin squares and combinatorial designs
\cite{DenesKeedwell1974,ColbournDinitz2007}. The classical existence theory for
mutually orthogonal Latin squares includes the disproof of Euler's conjecture by
Bose, Shrikhande, and Parker \cite{BoseShrikhandeParker1960}. Quantum Latin
squares were introduced by Musto and Vicary \cite{MustoVicary2016} as a quantum
analogue of Latin squares, motivated in part by constructions of unitary error
bases and related quantum-information protocols \cite{Werner2001,TadejZyczkowski2006}.
Subsequent work developed notions of orthogonality and quantum combinatorial
designs \cite{GoyenecheRaissiMartinoZyczkowski2018,MustoVicary2019}, including
order-six quantum constructions related to Euler's 36 officers problem
\cite{RatherBurchardtBruzdaRajchelLakshminarayanZyczkowski2022,ZyczkowskiBruzdaRajchelEtAl2023}.
The cardinality convention used below follows recent work on the cardinalities
of quantum Latin squares \cite{ZhangWangJi2025,ZangZhengTianShan2025,ZhangJi2026};
closely related maximal-cardinality questions are studied in \cite{ZhangCao2026}.

\begin{definition}[Quantum Latin square]
A quantum Latin square of order $n$, denoted by $\QLS(n)$, is an $n\times n$
array
\[
  \Phi=(\ket{\phi_{ij}})_{0\leq i,j\leq n-1}
\]
whose entries are unit vectors in $\C^n$, such that every row and every column
of $\Phi$ forms an orthonormal basis of $\C^n$.
\end{definition}

Two unit vectors are identified if they differ only by a global phase. That is,
\[
  \ket{u}\sim \ket{v}
  \quad\Longleftrightarrow\quad
  \ket{u}=e^{\mathrm i\theta}\ket{v}
  \quad\text{for some }\theta\in\mathbb R.
\]
The cardinality of a quantum Latin square $\Phi$ is the number of distinct
vectors appearing in the array, modulo this equivalence relation:
\[
  \card(\Phi)=\#\{[\ket{\phi_{ij}}]:0\leq i,j\leq n-1\}.
\]

\section{A QLS(6) of cardinality 13}

Let
\[
  \ket{0},\ket{1},\ket{2},\ket{3},\ket{4},\ket{5}
\]
be the computational basis of $\C^6$. Decompose
\[
  \C^6=U\oplus V,
\]
where
\[
  U=\operatorname{span}\{\ket{0},\ket{1},\ket{2},\ket{3}\},
  \qquad
  V=\operatorname{span}\{\ket{4},\ket{5}\}.
\]

In $U$, define
\[
  a=\ket{0},\qquad b=\ket{1},\qquad c=\ket{2},\qquad d=\ket{3}.
\]
Set
\[
  e=\frac{c+d}{\sqrt2},
  \qquad
  v=\frac{c-d}{\sqrt2}.
\]
Now define
\[
  f=\frac{b+v}{\sqrt2}
   =\frac{\ket{1}}{\sqrt2}+\frac{\ket{2}-\ket{3}}{2},
  \qquad
  g=\frac{b-v}{\sqrt2}
   =\frac{\ket{1}}{\sqrt2}-\frac{\ket{2}-\ket{3}}{2},
\]
and
\[
  h=\frac{a+v}{\sqrt2}
   =\frac{\ket{0}}{\sqrt2}+\frac{\ket{2}-\ket{3}}{2},
  \qquad
  r=\frac{a-v}{\sqrt2}
   =\frac{\ket{0}}{\sqrt2}-\frac{\ket{2}-\ket{3}}{2}.
\]

In $V$, define
\[
  x=\ket{4},
  \qquad
  y=\ket{5},
\]
and
\[
  z=\frac{x+y}{\sqrt2},
  \qquad
  w=\frac{x-y}{\sqrt2}.
\]

For reference, the entries used in this construction have the following
coordinate representations with respect to the ordered computational basis
$(\ket{0},\ket{1},\ket{2},\ket{3},\ket{4},\ket{5})$:
\[
\small
\begin{array}{lll}
 a=(1,0,0,0,0,0)^{\mathsf T},
& b=(0,1,0,0,0,0)^{\mathsf T},
& c=(0,0,1,0,0,0)^{\mathsf T},\\[2mm]
 d=(0,0,0,1,0,0)^{\mathsf T},
& e=(0,0,\frac1{\sqrt2},\frac1{\sqrt2},0,0)^{\mathsf T},
& f=(0,\frac1{\sqrt2},\frac12,-\frac12,0,0)^{\mathsf T},\\[2mm]
 g=(0,\frac1{\sqrt2},-\frac12,\frac12,0,0)^{\mathsf T},
& h=(\frac1{\sqrt2},0,\frac12,-\frac12,0,0)^{\mathsf T},
& r=(\frac1{\sqrt2},0,-\frac12,\frac12,0,0)^{\mathsf T},\\[2mm]
 x=(0,0,0,0,1,0)^{\mathsf T},
& y=(0,0,0,0,0,1)^{\mathsf T},
& z=(0,0,0,0,\frac1{\sqrt2},\frac1{\sqrt2})^{\mathsf T},\\[2mm]
 w=(0,0,0,0,\frac1{\sqrt2},-\frac1{\sqrt2})^{\mathsf T}.&&
\end{array}
\]

Consider the following $6\times6$ array:
\[
\Phi_{13}=
\begin{pmatrix}
 c&a&x&y&b&d\\
 d&w&a&b&z&c\\
 y&f&g&x&e&a\\
 a&g&f&e&w&z\\
 x&e&y&h&r&b\\
 b&z&e&r&h&w
\end{pmatrix}.
\]

\begin{proposition}
The array $\Phi_{13}$ is a $\QLS(6)$ with cardinality $13$.
\end{proposition}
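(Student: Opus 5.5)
The plan is a direct verification, organized by the orthogonal decomposition $\C^6=U\oplus V$. Since $U\perp V$, any vector of $U$ is orthogonal to any vector of $V$. So a set of six unit vectors consisting of four vectors in $U$ and two in $V$ is an orthonormal basis of $\C^6$ exactly when the four $U$-vectors are an orthonormal basis of $U$ and the two $V$-vectors are an orthonormal basis of $V$. The first step is to observe that every row and every column of $\Phi_{13}$ contains exactly four letters from $\{a,b,c,d,e,f,g,h,r\}\subset U$ and two from $\{x,y,z,w\}\subset V$. The problem then splits into checking $4$-element subsets of $U$ and $2$-element subsets of $V$.

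Second, I list the $V$-pairs. Rows give $\{x,y\},\{w,z\},\{y,x\},\{w,z\},\{x,y\},\{z,w\}$. Columns give $\{y,x\},\{w,z\},\{x,y\},\{y,x\},\{z,w\},\{z,w\}$. Both $\{x,y\}$ and $\{z,w\}$ are orthonormal bases of $V$, so the $V$-part is settled.

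Third, I list the $U$-quadruples:
\[
\text{rows: } \{a,b,c,d\},\ \{a,b,c,d\},\ \{a,e,f,g\},\ \{a,e,f,g\},\ \{b,e,h,r\},\ \{b,e,h,r\};
\]
\[
\text{columns: } \{a,b,c,d\},\ \{a,e,f,g\},\ \{a,e,f,g\},\ \{b,e,h,r\},\ \{b,e,h,r\},\ \{a,b,c,d\}.
\]
Only three quadruples occur, so it suffices to show that each is an orthonormal basis of $U$. The set $\{a,b,c,d\}$ is the standard basis of $U$. For $\{a,e,f,g\}$, note that $\{a,b,e,v\}$ is orthonormal because $e,v$ are the Hadamard rotation of $c,d$. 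Then $f,g$ are the Hadamard rotation of $b,v$, so $\{a,e,f,g\}$ is orthonormal. Likewise $\{b,e,h,r\}$ is orthonormal, since $h,r$ are the Hadamard rotation of $a,v$. All entries are unit vectors by construction. This proves $\Phi_{13}$ is a $\QLS(6)$.

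Finally, for the cardinality, I will read off that the letters appearing in $\Phi_{13}$ are exactly $a,b,c,d,e,f,g,h,r,x,y,z,w$, which is $13$ letters. It remains to show these are pairwise non-proportional. The four vectors $x,y,z,w$ lie in $V$ and the nine others lie in $U$, so no vector from one group can be a phase multiple of one from the other. Within each group, distinct letters have either different supports or coordinates that are not phase multiples. For example, $f$ and $g$ have the same support, but their $\ket{1}$-coordinates are equal while their $\ket{2}$-coordinates differ in sign, so no single phase relates them. The same argument distinguishes $h$ from $r$ and $z$ from $w$. I do not expect a real obstacle. The only care needed is bookkeeping: making sure each row and column is assigned the correct quadruple and pair.
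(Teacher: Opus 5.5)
Your proposal is correct and follows essentially the same route as the paper. The paper also splits $\C^6=U\oplus V$, identifies the three $U$-bases $\{a,b,c,d\}$, $\{a,e,f,g\}$, $\{b,e,h,r\}$ and the two $V$-bases $\{x,y\}$, $\{z,w\}$ (with the same row and column assignments you found), and counts the $13$ phase classes via supports and relative signs, which your explicit $f$ versus $g$ check spells out slightly more carefully.
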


\begin{proof}
Define three orthonormal bases of $U$ by
\[
  B_0=\{a,b,c,d\},
  \qquad
  B_1=\{a,e,f,g\},
  \qquad
  B_2=\{b,e,h,r\}.
\]
The set $B_0$ is the computational basis of $U$. Since
\[
  e=\frac{c+d}{\sqrt2},
  \qquad
  v=\frac{c-d}{\sqrt2},
\]
the pair $\{e,v\}$ is an orthonormal basis of $\operatorname{span}\{c,d\}$.
The pair $\{f,g\}$ is obtained from $\{b,v\}$ by the Hadamard rotation
\[
  f=\frac{b+v}{\sqrt2},
  \qquad
  g=\frac{b-v}{\sqrt2}.
\]
Therefore $\{f,g\}$ is an orthonormal basis of $\operatorname{span}\{b,v\}$,
and it is orthogonal to both $a$ and $e$. Hence $B_1$ is an orthonormal basis of
$U$.

Similarly, the pair $\{h,r\}$ is obtained from $\{a,v\}$ by the Hadamard
rotation
\[
  h=\frac{a+v}{\sqrt2},
  \qquad
  r=\frac{a-v}{\sqrt2}.
\]
Thus $\{h,r\}$ is an orthonormal basis of $\operatorname{span}\{a,v\}$, and it
is orthogonal to both $b$ and $e$. Hence $B_2$ is also an orthonormal basis of
$U$.

Define two orthonormal bases of $V$ by
\[
  S_0=\{x,y\},
  \qquad
  S_1=\{z,w\}.
\]
The set $S_0$ is the computational basis of $V$, and $S_1$ is obtained from
$S_0$ by a two-dimensional Hadamard rotation. Since $U\perp V$, every union
$B_i\cup S_j$ is an orthonormal basis of $\C^6=U\oplus V$.

The six rows of $\Phi_{13}$ are of the following types:
\[
\begin{array}{c|c}
\text{row} & \text{orthonormal basis} \\\hline
R_1 & B_0\cup S_0\\
R_2 & B_0\cup S_1\\
R_3 & B_1\cup S_0\\
R_4 & B_1\cup S_1\\
R_5 & B_2\cup S_0\\
R_6 & B_2\cup S_1
\end{array}
\]
and the six columns are of the following types:
\[
\begin{array}{c|c}
\text{column} & \text{orthonormal basis} \\\hline
C_1 & B_0\cup S_0\\
C_2 & B_1\cup S_1\\
C_3 & B_1\cup S_0\\
C_4 & B_2\cup S_0\\
C_5 & B_2\cup S_1\\
C_6 & B_0\cup S_1
\end{array}
\]
Thus every row and every column is an orthonormal basis of $\C^6$, so
$\Phi_{13}$ is a $\QLS(6)$.

The vectors appearing in $\Phi_{13}$ are exactly
\[
  \vectset(\Phi_{13})=\{a,b,c,d,e,f,g,h,r,x,y,z,w\}.
\]
This set has $13$ elements. No two of them differ only by a global phase:
vectors from $U$ and vectors from $V$ have orthogonal supports; the four vectors
$x,y,z,w$ in $V$ are pairwise not scalar multiples; and the nine vectors
$a,b,c,d,e,f,g,h,r$ in $U$ have different supports or different relative signs
on a common support. Therefore
\[
  \card(\Phi_{13})=13.
\]
\end{proof}

\section{A QLS(6) of cardinality 15}

For later comparison with the cardinality-$17$ example, we first give a
coordinate-plane construction of cardinality $15$. Let
\[
  e_i=\ket{i},\qquad 0\leq i\leq 5.
\]
For $0\leq i<j\leq 5$, write
\[
  p_{ij}=\frac{e_i+e_j}{\sqrt2},
  \qquad
  q_{ij}=\frac{e_i-e_j}{\sqrt2}.
\]
Thus $\{p_{ij},q_{ij}\}$ is an orthonormal basis of the coordinate plane
$\operatorname{span}\{e_i,e_j\}$.

We use the five coordinate pairs
\[
  (0,1),\qquad (0,4),\qquad (1,2),\qquad (2,3),\qquad (2,4),
\]
and the five computational basis vectors
\[
  e_0,\qquad e_1,\qquad e_3,\qquad e_4,\qquad e_5.
\]
The complete list of vectors used in the construction is therefore
\[
\begin{gathered}
  p_{01},q_{01},\quad p_{04},q_{04},\quad p_{12},q_{12},\quad
  p_{23},q_{23},\quad p_{24},q_{24},\\
  e_0,e_1,e_3,e_4,e_5.
\end{gathered}
\]

Equivalently, the fifteen vectors used in $\Phi_{15}$ have the following
coordinate forms:
\[
\small
\begin{array}{lll}
 p_{01}=(\frac1{\sqrt2},\frac1{\sqrt2},0,0,0,0)^{\mathsf T},
& q_{01}=(\frac1{\sqrt2},-\frac1{\sqrt2},0,0,0,0)^{\mathsf T},
& p_{04}=(\frac1{\sqrt2},0,0,0,\frac1{\sqrt2},0)^{\mathsf T},\\[2mm]
 q_{04}=(\frac1{\sqrt2},0,0,0,-\frac1{\sqrt2},0)^{\mathsf T},
& p_{12}=(0,\frac1{\sqrt2},\frac1{\sqrt2},0,0,0)^{\mathsf T},
& q_{12}=(0,\frac1{\sqrt2},-\frac1{\sqrt2},0,0,0)^{\mathsf T},\\[2mm]
 p_{23}=(0,0,\frac1{\sqrt2},\frac1{\sqrt2},0,0)^{\mathsf T},
& q_{23}=(0,0,\frac1{\sqrt2},-\frac1{\sqrt2},0,0)^{\mathsf T},
& p_{24}=(0,0,\frac1{\sqrt2},0,\frac1{\sqrt2},0)^{\mathsf T},\\[2mm]
 q_{24}=(0,0,\frac1{\sqrt2},0,-\frac1{\sqrt2},0)^{\mathsf T},
& e_0=(1,0,0,0,0,0)^{\mathsf T},
& e_1=(0,1,0,0,0,0)^{\mathsf T},\\[2mm]
 e_3=(0,0,0,1,0,0)^{\mathsf T},
& e_4=(0,0,0,0,1,0)^{\mathsf T},
& e_5=(0,0,0,0,0,1)^{\mathsf T}.
\end{array}
\]

Consider the following $6\times6$ array:
\[
\Phi_{15}=
\begin{pmatrix}
 p_{04}&p_{12}&q_{12}&e_3&q_{04}&e_5\\
 p_{23}&e_5&e_4&p_{01}&q_{23}&q_{01}\\
 e_1&e_0&e_3&p_{24}&e_5&q_{24}\\
 q_{04}&q_{12}&p_{12}&e_5&p_{04}&e_3\\
 q_{23}&e_4&e_5&q_{01}&p_{23}&p_{01}\\
 e_5&e_3&e_0&q_{24}&e_1&p_{24}
\end{pmatrix}.
\]

\begin{proposition}
The array $\Phi_{15}$ is a $\QLS(6)$ with cardinality $15$.
\end{proposition}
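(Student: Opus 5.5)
The plan is to argue as in the proof of the cardinality-$13$ proposition: reduce orthonormality of each row and column to a decomposition of $\C^6$ into mutually orthogonal coordinate subspaces. The one fact needed is that for each pair $i<j$, $\{p_{ij},q_{ij}\}$ is an orthonormal basis of $\operatorname{span}\{e_i,e_j\}$. Consequently, if a line of the array is a union of such pairs $\{p_{ij},q_{ij}\}$ over disjoint coordinate pairs together with the remaining computational basis vectors $e_k$, it is automatically an orthonormal basis of $\C^6$. Vectors supported on disjoint coordinate sets are orthogonal, and the supports together partition $\{0,\dots,5\}$.

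For each line I will record the partition of $\{0,1,2,3,4,5\}$ it induces, shown in a table as in the previous proof.
\begin{itemize}
\item Row~$1$ uses $\{0,4\},\{1,2\},\{3\},\{5\}$.
\item Row~$2$ uses $\{2,3\},\{0,1\},\{4\},\{5\}$.
\item Row~$3$ uses $\{2,4\},\{0\},\{1\},\{3\},\{5\}$.
\item Rows $4,5,6$ contain the same sets as rows $1,2,3$ respectively, only permuted and with $p\leftrightarrow q$ swapped within each pair.
\item Column~$1$ uses $\{0,4\},\{2,3\},\{1\},\{5\}$.
\item Column~$2$ uses $\{1,2\},\{0\},\{3\},\{4\},\{5\}$.
\item Column~$3$ uses $\{1,2\},\{0\},\{3\},\{4\},\{5\}$.
\item Column~$4$ uses $\{0,1\},\{2,4\},\{3\},\{5\}$.
\item Column~$5$ uses $\{0,4\},\{2,3\},\{1\},\{5\}$.
\item Column~$6$ uses $\{0,1\},\{2,4\},\{3\},\{5\}$.
\end{itemize}
In every case both members $p_{ij},q_{ij}$ of each pair used appear in the line, and the singletons fill exactly the uncovered coordinates. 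Hence each row and each column is an orthonormal basis, and $\Phi_{15}$ is a $\QLS(6)$.

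For the cardinality, I will note that the entries are exactly the fifteen listed vectors $p_{01},q_{01},p_{04},q_{04},p_{12},q_{12},p_{23},q_{23},p_{24},q_{24},e_0,e_1,e_3,e_4,e_5$, and that each of them actually occurs. These are pairwise inequivalent under global phase. Two vectors that are phase multiples must have the same support. The supports here are five distinct two-element sets and five distinct singletons. Within a common support $\{i,j\}$, the vectors $p_{ij}$ and $q_{ij}$ are orthogonal, so they are not proportional. Therefore $\card(\Phi_{15})=15$.

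The only real obstacle is bookkeeping: checking all twelve lines without error, especially that no line contains $p_{ij}$ without $q_{ij}$ and that no coordinate is covered twice. I would handle this by tabulating the partitions as above.
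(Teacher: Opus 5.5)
Your proposal is correct and follows essentially the same route as the paper. The paper also tabulates, for each row and column, a decomposition into coordinate-plane Hadamard pairs $\{p_{ij},q_{ij}\}$ and singletons $\{e_k\}$ covering $\{0,\dots,5\}$, and then counts $10+5=15$ phase classes via distinct supports. The differences are cosmetic: you treat rows $4$--$6$ by noting they coincide as sets with rows $1$--$3$, and you separate $p_{ij}$ from $q_{ij}$ by orthogonality rather than by their differing relative signs.
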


\begin{proof}
Each row and each column is a disjoint union of coordinate-plane Hadamard pairs
and computational basis vectors. In the following table, $(ij)$ denotes the pair
$\{p_{ij},q_{ij}\}$ and $\{k\}$ denotes the singleton $\{e_k\}$.
The six rows have the decompositions
\[
\begin{array}{c|c}
\text{row} & \text{coordinate decomposition} \\\hline
R_1 & (04)\cup(12)\cup\{3\}\cup\{5\}\\
R_2 & (23)\cup(01)\cup\{4\}\cup\{5\}\\
R_3 & (24)\cup\{0\}\cup\{1\}\cup\{3\}\cup\{5\}\\
R_4 & (04)\cup(12)\cup\{3\}\cup\{5\}\\
R_5 & (23)\cup(01)\cup\{4\}\cup\{5\}\\
R_6 & (24)\cup\{0\}\cup\{1\}\cup\{3\}\cup\{5\}
\end{array}
\]
and the six columns have the decompositions
\[
\begin{array}{c|c}
\text{column} & \text{coordinate decomposition} \\\hline
C_1 & (04)\cup(23)\cup\{1\}\cup\{5\}\\
C_2 & (12)\cup\{0\}\cup\{3\}\cup\{4\}\cup\{5\}\\
C_3 & (12)\cup\{0\}\cup\{3\}\cup\{4\}\cup\{5\}\\
C_4 & (01)\cup(24)\cup\{3\}\cup\{5\}\\
C_5 & (04)\cup(23)\cup\{1\}\cup\{5\}\\
C_6 & (01)\cup(24)\cup\{3\}\cup\{5\}.
\end{array}
\]
In each row and in each column, the displayed coordinate pieces are mutually
orthogonal and together cover the coordinate set $\{0,1,2,3,4,5\}$. Hence each
row and each column is an orthonormal basis of $\C^6$. Thus $\Phi_{15}$ is a
$\QLS(6)$.

The vectors appearing in $\Phi_{15}$ are exactly
\[
\begin{gathered}
  p_{01},q_{01},\quad p_{04},q_{04},\quad p_{12},q_{12},\quad
  p_{23},q_{23},\quad p_{24},q_{24},\\
  e_0,e_1,e_3,e_4,e_5.
\end{gathered}
\]
The five coordinate pairs contribute $10$ Hadamard-pair vectors, and the five
listed computational basis vectors also occur. No Hadamard-pair vector is a
scalar multiple of a computational basis vector, and two Hadamard-pair vectors
from different coordinate pairs have different supports. For a fixed coordinate
pair $(i,j)$, the vectors $p_{ij}$ and $q_{ij}$ have different relative signs,
so they are not scalar multiples. Therefore the number of phase-classes is
\[
  10+5=15.
\]
Hence $\operatorname{card}(\Phi_{15})=15$.
\end{proof}

\section{A QLS(6) of cardinality 17}

For $0\leq i<j\leq 5$, define the two Hadamard-pair vectors
\[
  p_{ij}=\frac{\ket{i}+\ket{j}}{\sqrt2},
  \qquad
  q_{ij}=\frac{\ket{i}-\ket{j}}{\sqrt2}.
\]
The pair $\{p_{ij},q_{ij}\}$ is an orthonormal basis of the coordinate plane
$\operatorname{span}\{\ket{i},\ket{j}\}$.

We use the seven coordinate pairs
\[
  (0,1),\quad (0,3),\quad (0,4),\quad (1,2),
  \quad (1,5),\quad (2,3),\quad (2,4).
\]

For completeness, the seventeen vectors used in $\Phi_{17}$ are listed below
in coordinates:
\[
\small
\begin{array}{lll}
 p_{01}=(\frac1{\sqrt2},\frac1{\sqrt2},0,0,0,0)^{\mathsf T},
& q_{01}=(\frac1{\sqrt2},-\frac1{\sqrt2},0,0,0,0)^{\mathsf T},
& p_{03}=(\frac1{\sqrt2},0,0,\frac1{\sqrt2},0,0)^{\mathsf T},\\[2mm]
 q_{03}=(\frac1{\sqrt2},0,0,-\frac1{\sqrt2},0,0)^{\mathsf T},
& p_{04}=(\frac1{\sqrt2},0,0,0,\frac1{\sqrt2},0)^{\mathsf T},
& q_{04}=(\frac1{\sqrt2},0,0,0,-\frac1{\sqrt2},0)^{\mathsf T},\\[2mm]
 p_{12}=(0,\frac1{\sqrt2},\frac1{\sqrt2},0,0,0)^{\mathsf T},
& q_{12}=(0,\frac1{\sqrt2},-\frac1{\sqrt2},0,0,0)^{\mathsf T},
& p_{15}=(0,\frac1{\sqrt2},0,0,0,\frac1{\sqrt2})^{\mathsf T},\\[2mm]
 q_{15}=(0,\frac1{\sqrt2},0,0,0,-\frac1{\sqrt2})^{\mathsf T},
& p_{23}=(0,0,\frac1{\sqrt2},\frac1{\sqrt2},0,0)^{\mathsf T},
& q_{23}=(0,0,\frac1{\sqrt2},-\frac1{\sqrt2},0,0)^{\mathsf T},\\[2mm]
 p_{24}=(0,0,\frac1{\sqrt2},0,\frac1{\sqrt2},0)^{\mathsf T},
& q_{24}=(0,0,\frac1{\sqrt2},0,-\frac1{\sqrt2},0)^{\mathsf T},
& e_3=(0,0,0,1,0,0)^{\mathsf T},\\[2mm]
 e_4=(0,0,0,0,1,0)^{\mathsf T},
& e_5=(0,0,0,0,0,1)^{\mathsf T}.&
\end{array}
\]

Consider the following $6\times6$ array:
\[
\Phi_{17}=
\begin{pmatrix}
 p_{04}&p_{12}&q_{12}&\ket{3}&q_{04}&\ket{5}\\
 p_{23}&\ket{5}&\ket{4}&p_{01}&q_{23}&q_{01}\\
 p_{15}&p_{03}&q_{03}&p_{24}&q_{15}&q_{24}\\
 q_{04}&q_{12}&p_{12}&\ket{5}&p_{04}&\ket{3}\\
 q_{23}&\ket{4}&\ket{5}&q_{01}&p_{23}&p_{01}\\
 q_{15}&q_{03}&p_{03}&q_{24}&p_{15}&p_{24}
\end{pmatrix}.
\]

\begin{proposition}
The array $\Phi_{17}$ is a $\QLS(6)$ with cardinality $17$.
\end{proposition}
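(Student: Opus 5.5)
We follow the template of the proof for $\Phi_{15}$. Every entry of $\Phi_{17}$ is either a computational basis vector $\ket{k}$ or a member of a Hadamard pair $\{p_{ij},q_{ij}\}$. The pair $\{p_{ij},q_{ij}\}$ is an orthonormal basis of $\operatorname{span}\{\ket{i},\ket{j}\}$, and vectors supported on disjoint coordinate sets are orthogonal. So it suffices to show that each row and each column is a disjoint union of complete Hadamard pairs $(ij)$ and singletons $\{k\}$ whose supports partition $\{0,1,\dots,5\}$. Each such line is then an orthonormal basis of $\C^6$.

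We read off the decompositions directly, writing $(ij)$ for $\{p_{ij},q_{ij}\}$ and $\{k\}$ for $\{\ket{k}\}$.

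Rows:
\[
\begin{array}{c|c}
\text{row} & \text{coordinate decomposition}\\\hline
R_1,R_4 & (04)\cup(12)\cup\{3\}\cup\{5\}\\
R_2,R_5 & (23)\cup(01)\cup\{4\}\cup\{5\}\\
R_3,R_6 & (15)\cup(03)\cup(24)
\end{array}
\]

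Columns:
\[
\begin{array}{c|c}
\text{column} & \text{coordinate decomposition}\\\hline
C_1,C_5 & (04)\cup(23)\cup(15)\\
C_2,C_3 & (12)\cup(03)\cup\{4\}\cup\{5\}\\
C_4,C_6 & (01)\cup(24)\cup\{3\}\cup\{5\}
\end{array}
\]

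Each of these is a partition of $\{0,\dots,5\}$. For every line we will also check that both members $p_{ij}$ and $q_{ij}$ of each pair actually occur in that line, not just one of them. For example, $C_1$ contains $p_{04},p_{23},p_{15},q_{04},q_{23},q_{15}$. This membership check is the only real risk, and it is a finite inspection of the $36$ entries against the table.

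For the cardinality, the entries are the $14$ Hadamard-pair vectors coming from the seven pairs, together with $\ket{3},\ket{4},\ket{5}$. We verify that each of these occurs somewhere in the array, so the vector set has exactly these $17$ elements. They are pairwise inequivalent under global phase, by the same argument as for $\Phi_{15}$:
\begin{itemize}
\item basis vectors have singleton supports, while pair vectors have two-element supports;
\item pair vectors from distinct pairs have distinct supports, since the seven coordinate pairs are distinct;
\item $p_{ij}$ and $q_{ij}$ have different relative signs on a common support, so they are not scalar multiples.
\end{itemize}
Hence $\card(\Phi_{17})=14+3=17$.
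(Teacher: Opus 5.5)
Your proposal is correct and follows essentially the same route as the paper. You use the same row and column decompositions into complete Hadamard pairs $(ij)$ and singletons $\{k\}$ whose supports partition $\{0,\dots,5\}$, followed by the same count $14+3=17$ and the same support/relative-sign argument for phase-inequivalence. Your explicit check that both $p_{ij}$ and $q_{ij}$ occur in each line, and that all $17$ vectors actually appear, is left implicit in the paper, and it holds on inspection of the array.
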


\begin{proof}
Each row and column of $\Phi_{17}$ is a disjoint union of two-dimensional
Hadamard pairs, possibly together with leftover computational basis vectors.
More explicitly, the six rows have the following decompositions:
\[
\begin{array}{c|c}
\text{row} & \text{coordinate decomposition} \\\hline
R_1 & (04)\cup(12)\cup\{3\}\cup\{5\}\\
R_2 & (23)\cup(01)\cup\{4\}\cup\{5\}\\
R_3 & (15)\cup(03)\cup(24)\\
R_4 & (04)\cup(12)\cup\{3\}\cup\{5\}\\
R_5 & (23)\cup(01)\cup\{4\}\cup\{5\}\\
R_6 & (15)\cup(03)\cup(24)
\end{array}
\]
Here $(ij)$ means the orthonormal pair $\{p_{ij},q_{ij}\}$, while $\{k\}$ means
the singleton $\{\ket{k}\}$. In every row, the displayed coordinate pieces are
mutually disjoint and together cover $\{0,1,2,3,4,5\}$. Hence every row is an
orthonormal basis of $\C^6$.

The six columns have the following decompositions:
\[
\begin{array}{c|c}
\text{column} & \text{coordinate decomposition} \\\hline
C_1 & (04)\cup(23)\cup(15)\\
C_2 & (12)\cup(03)\cup\{4\}\cup\{5\}\\
C_3 & (12)\cup(03)\cup\{4\}\cup\{5\}\\
C_4 & (01)\cup(24)\cup\{3\}\cup\{5\}\\
C_5 & (04)\cup(23)\cup(15)\\
C_6 & (01)\cup(24)\cup\{3\}\cup\{5\}
\end{array}
\]
Again, in every column the displayed coordinate pieces are mutually disjoint and
together cover $\{0,1,2,3,4,5\}$. Hence every column is also an orthonormal basis
of $\C^6$. Thus $\Phi_{17}$ is a $\QLS(6)$.

It remains to compute its cardinality. The distinct Hadamard-pair vectors
appearing in $\Phi_{17}$ are
\[
\begin{gathered}
  p_{01},q_{01},\quad p_{03},q_{03},\quad p_{04},q_{04},\quad
  p_{12},q_{12},\\
  p_{15},q_{15},\quad p_{23},q_{23},\quad p_{24},q_{24}.
\end{gathered}
\]
These are $14$ vectors. In addition, the computational basis vectors
\[
  \ket{3},\qquad \ket{4},\qquad \ket{5}
\]
appear in the array. Therefore at most $17$ phase-classes appear.

No two of the listed vectors differ only by a global phase. Indeed, the vectors
$p_{ij}$ and $q_{ij}$ both have support $\{i,j\}$ but have different relative
signs, while vectors associated with different pairs have different supports.
No $p_{ij}$ or $q_{ij}$ is a scalar multiple of a computational basis vector,
because each has support of size $2$. Hence
\[
  \card(\Phi_{17})=14+3=17.
\]
\end{proof}

\begin{remark}
The three examples use two related mechanisms. The cardinality-$13$ example is a
$4+2$ direct-sum construction with shared bases in the four-dimensional part.
The cardinality-$15$ example is a coordinate-plane construction in which five
Hadamard pairs contribute $10$ vectors and five computational basis vectors
remain present, giving $10+5=15$. The cardinality-$17$ example uses seven
Hadamard pairs, contributing $14$ vectors, together with three computational
basis vectors, giving $14+3=17$.
\end{remark}
\section*{Acknowledgments}

The author gratefully acknowledge support from the Natural Science Research Project of Jiangsu Higher Education Institutions of China under Grant No. 24KJB520033. The author acknowledge the High Performance Computing Center of the School of Mathematics and Statistics, Nantong University, together with the National Supercomputing Center in Kunshan, for their support in providing computational resources.


\begin{thebibliography}{99}

\bibitem{Euler1782}
L. Euler,
\newblock \emph{Recherches sur une nouvelle esp\`ece de quarr\'es magiques},
\newblock Verhandelingen uitgegeven door het Zeeuwsch Genootschap der Wetenschappen te Vlissingen
\textbf{9}, 85--239, 1782.

\bibitem{DenesKeedwell1974}
J. D\'enes and A. D. Keedwell,
\newblock \emph{Latin Squares and Their Applications},
\newblock Akad\'emiai Kiad\'o, Budapest, 1974.

\bibitem{ColbournDinitz2007}
C. J. Colbourn and J. H. Dinitz, editors,
\newblock \emph{Handbook of Combinatorial Designs}, second edition,
\newblock Chapman \& Hall/CRC, Boca Raton, 2007.

\bibitem{BoseShrikhandeParker1960}
R. C. Bose, S. S. Shrikhande, and E. T. Parker,
\newblock \emph{Further results on the construction of mutually orthogonal Latin squares and the falsity of Euler's conjecture},
\newblock Canadian Journal of Mathematics \textbf{12}, 189--203, 1960.
\newblock doi:\href{https://doi.org/10.4153/CJM-1960-016-5}{10.4153/CJM-1960-016-5}.

\bibitem{Werner2001}
R. F. Werner,
\newblock \emph{All teleportation and dense coding schemes},
\newblock Journal of Physics A: Mathematical and General \textbf{34}(35), 7081--7094, 2001.
\newblock arXiv:quant-ph/0003070, doi:\href{https://doi.org/10.1088/0305-4470/34/35/332}{10.1088/0305-4470/34/35/332}.

\bibitem{TadejZyczkowski2006}
W. Tadej and K. \.{Z}yczkowski,
\newblock \emph{A concise guide to complex Hadamard matrices},
\newblock Open Systems \& Information Dynamics \textbf{13}(2), 133--177, 2006.
\newblock arXiv:quant-ph/0512154, doi:\href{https://doi.org/10.1007/s11080-006-8220-2}{10.1007/s11080-006-8220-2}.

\bibitem{MustoVicary2016}
B. Musto and J. Vicary,
\newblock \emph{Quantum Latin squares and unitary error bases},
\newblock Quantum Information and Computation \textbf{16}(15--16), 1318--1332, 2016.
\newblock arXiv:1504.02715.

\bibitem{GoyenecheRaissiMartinoZyczkowski2018}
D. Goyeneche, Z. Raissi, S. Di Martino, and K. \.{Z}yczkowski,
\newblock \emph{Entanglement and quantum combinatorial designs},
\newblock Physical Review A \textbf{97}, 062326, 2018.
\newblock arXiv:1708.05946, doi:\href{https://doi.org/10.1103/PhysRevA.97.062326}{10.1103/PhysRevA.97.062326}.

\bibitem{MustoVicary2019}
B. Musto and J. Vicary,
\newblock \emph{Orthogonality for quantum Latin isometry squares},
\newblock Electronic Proceedings in Theoretical Computer Science \textbf{287}, 253--266, 2019.
\newblock arXiv:1804.04042, doi:\href{https://doi.org/10.4204/EPTCS.287.15}{10.4204/EPTCS.287.15}.

\bibitem{RatherBurchardtBruzdaRajchelLakshminarayanZyczkowski2022}
S. A. Rather, A. Burchardt, W. Bruzda, G. Rajchel-Mieldzio\'c, A. Lakshminarayan, and K. \.{Z}yczkowski,
\newblock \emph{Thirty-six entangled officers of Euler: Quantum solution to a classically impossible problem},
\newblock Physical Review Letters \textbf{128}, 080507, 2022.
\newblock arXiv:2104.05122, doi:\href{https://doi.org/10.1103/PhysRevLett.128.080507}{10.1103/PhysRevLett.128.080507}.

\bibitem{ZyczkowskiBruzdaRajchelEtAl2023}
K. \.{Z}yczkowski, W. Bruzda, G. Rajchel-Mieldzio\'c, A. Burchardt, S. A. Rather, and A. Lakshminarayan,
\newblock \emph{$9\times4=6\times6$: Understanding the quantum solution to Euler's problem of 36 officers},
\newblock Journal of Physics: Conference Series \textbf{2448}, 012003, 2023.
\newblock arXiv:2204.06800, doi:\href{https://doi.org/10.1088/1742-6596/2448/1/012003}{10.1088/1742-6596/2448/1/012003}.

\bibitem{ZhangWangJi2025}
Y. Zhang, X. Wang, and L. Ji,
\newblock \emph{Quantum Latin squares with all possible cardinalities},
\newblock arXiv:2507.05642, 2025.

\bibitem{ZangZhengTianShan2025}
Y. Zang, M. Zheng, Z. Tian, and X. Shan,
\newblock \emph{On the cardinalities of quantum Latin squares},
\newblock arXiv:2508.01972, 2025.

\bibitem{ZhangJi2026}
Y. Zhang and L. Ji,
\newblock \emph{Quantum Latin squares of order $6m$ with all possible cardinalities},
\newblock arXiv:2601.09132, 2026.

\bibitem{ZhangCao2026}
Y. Zhang and H. Cao,
\newblock \emph{The maximal cardinality of quantum Latin squares},
\newblock Discrete Mathematics \textbf{349}, 114863, 2026.

\end{thebibliography}
\end{document}